\documentclass[11pt]{article}

\usepackage[margin=1in]{geometry}
\usepackage[T1]{fontenc}
\usepackage{lmodern}
\usepackage{amsmath,amssymb,amsthm}
\usepackage{microtype}
\usepackage{needspace}
\usepackage[numbers,sort&compress]{natbib}
\usepackage{xurl}
\usepackage[hidelinks]{hyperref}

\newtheorem{theorem}{Theorem}
\newtheorem{lemma}[theorem]{Lemma}
\newtheorem{corollary}[theorem]{Corollary}
\theoremstyle{remark}
\newtheorem{remark}[theorem]{Remark}
\DeclareMathOperator{\sr}{sr}
\DeclareMathOperator{\pr}{pr}
\DeclareMathOperator{\rank}{rank}

\newcommand{\F}{\mathbb F}
\newcommand{\cS}{\mathcal S}
\newcommand{\xx}{\mathbf x}
\newcommand{\doi}[1]{\href{https://doi.org/#1}{doi:\nolinkurl{#1}}}
\hypersetup{pdftitle={Limitations of the slice rank method in additive combinatorics},
  pdfauthor={Sankeerth Rao Karingula and Shachar Lovett}}

\title{Limitations of the slice rank method in additive combinatorics}
\author{
Sankeerth Rao Karingula\\
{Agentin AI}\\
{\texttt{sankeerthrao@agentin.ai}}
\and
Shachar Lovett\thanks{Supported by Simons Investigator Award \#929894 and NSF award CCF-2425349.}\\
{University of California, San Diego}\\
{\texttt{slovett@ucsd.edu}}
}
\date{September 2026}

\begin{document}
\maketitle

\begin{abstract}
The slice rank method gives exponential bounds for sets with no
three-term arithmetic progression in finite vector spaces of odd
characteristic and for three-sunflower-free families of subsets of a
fixed ground set. We show that for $k\ge4$, every tensor that is
nonzero exactly on the $k$-term arithmetic progression relation or
the $k$-sunflower relation has maximal slice rank over every coefficient
field. When the support is prescribed only on pairwise distinct inputs,
we obtain comparable lower bounds, which likewise rule out exponential savings.
\end{abstract}

\section{Introduction}

The polynomial method of Croot, Lev, and Pach~\cite{CLP17} gave an
exponential bound for progression-free subsets of $(\mathbb Z/4\mathbb Z)^n$.
Ellenberg and Gijswijt~\cite{EG17} adapted it to three-term progressions
in $\mathbb F_q^n$, for odd $q$, resolving the cap-set problem when $q=3$.
Tao~\cite{Tao16} gave the symmetric tensor formulation that introduced
slice rank. The method separates a combinatorial argument into two
parts: construct a tensor of small slice rank, and show that its
restriction to a configuration-free set is a nonzero diagonal tensor.

Subsequent applications include bounds for tri-colored sum-free sets
and limitations on the group-theoretic approach to matrix
multiplication~\cite{BCCGNSU17}. Naslund and Sawin~\cite{NS17} used
slice rank in the Erd\H{o}s--Szemer\'edi sunflower problem~\cite{ES78},
obtaining an exponential bound for three-sunflower-free families of
subsets of a fixed ground set.

We ask whether choosing a different tensor can extend these exponential
savings to four or more inputs. We fix the combinatorial relation but
allow the field and every nonzero tensor entry to vary. For both sunflowers and
arithmetic progressions, requiring the tensor to realize the entire
relation forces maximal slice rank. In the distinct-input variants
stated below, the slice rank remains a constant fraction of the ambient
size. Thus these classes of tensors cannot give an exponential
improvement over the trivial bound.

Tao and Sawin~\cite[Proposition~11]{TaoSawin16} previously proved a
related obstruction for progressions of length at least eight. Their
result assumes nonzero diagonal entries and zero entries outside the
progression relation, but permits zeros on nonconstant progressions.
Our finite-group result applies already at length four under the
stronger assumption that every progression receives a nonzero value.
These support hypotheses distinguish the two statements.

\subsection{Slice rank and support conditions}

Let $X$ be a finite set, $\F$ a field, and $k\ge2$. A
\emph{slice in direction $i$} is a function on $X^k$ of the form
$f(x_i)G(\xx_{-i})$, where
$\xx_{-i}$ denotes all inputs other than $x_i$. The \emph{slice rank}
$\sr(T)$ of $T:X^k\to\F$ is the minimum of $r_1+\cdots+r_k$ over all
representations
\[
 T(x_1,\ldots,x_k)
 =\sum_{i=1}^k\sum_{j=1}^{r_i}f_{i,j}(x_i)G_{i,j}(\xx_{-i}).
\]
Slicing along one direction gives $\sr(T)\le |X|$. Restricting the
inputs independently cannot increase slice rank, since a restricted
slice is still a slice or is zero. A tensor is \emph{diagonal} if it
vanishes unless $x_1=\cdots=x_k$. If all its diagonal entries are
nonzero, its slice rank is $|X|$~\cite{Tao16}.
Consequently, if $T$ restricts to such a tensor on $A^k$, then
$|A|\le\sr(T)$.

For a relation $R\subseteq X^k$, we distinguish two support conditions.
A tensor \emph{faithfully realizes $R$} if
\[
 T(x_1,\ldots,x_k)\ne0
 \quad\Longleftrightarrow\quad (x_1,\ldots,x_k)\in R
 \qquad\text{for every tuple in }X^k.
\]
It \emph{realizes $R$ on distinct inputs} if the same equivalence is
required only when $x_1,\ldots,x_k$ are pairwise distinct.
In the second definition, every entry on a tuple with a repetition is
unrestricted, including diagonal entries and entries outside $R$.
Both definitions allow arbitrary nonzero values and impose no symmetry
or product structure.

The second condition alone need not make a tensor diagonal on a
configuration-free set. For that application one may additionally
require nonzero diagonal entries and zero entries on all other tuples
with repetitions. Our lower bounds for distinct inputs hold without
these additional requirements, and therefore also cover this choice.

\subsection{Sunflowers}

A \emph{$k$-sunflower} is a collection of $k$ pairwise distinct sets
whose pairwise intersections are all equal. For indicator vectors in
$\{0,1\}^n$, equality of the pairwise intersections is equivalent to
requiring every coordinate pattern to belong to
\[
 \cS_k=\{0^k,e_1,\ldots,e_k,1^k\}\subseteq\{0,1\}^k,
\]
where $e_i$ is the $i$th unit vector. An element of the ground set is
absent from every set, belongs to exactly one set, or belongs to all
sets. We call the resulting relation on $(\{0,1\}^n)^k$ the
\emph{sunflower relation}. This relation allows repetitions.
Explicitly, faithful realization means
\begin{equation}\label{eq:faithful}
 T(x_1,\ldots,x_k)\ne0
 \quad\Longleftrightarrow\quad
 (x_{1,j},\ldots,x_{k,j})\in\cS_k\quad\text{for all }j\in[n].
\end{equation}

\paragraph{Three inputs.}

The construction of Naslund and Sawin~\cite{NS17} can be written over
$\mathbb F_3$ as
\[
 P_n(x,y,z)=\prod_{j=1}^n(2-x_j-y_j-z_j).
\]
A factor vanishes exactly when its three bits have weight two, so
$P_n$ faithfully realizes the three-sunflower relation. Its total
degree is at most $n$, and every monomial is multilinear. Each monomial
therefore has degree at most $\lfloor n/3\rfloor$ in at least one of
the three input groups $x,y,z$. Assign it to one such group. Collecting
terms with the same monomial in that group produces one slice. There
are $\sum_{j=0}^{\lfloor n/3\rfloor}\binom nj$ possible monomials in
each group, so
\begin{equation}\label{eq:three-bound}
 \sr(P_n)\le3\sum_{j=0}^{\lfloor n/3\rfloor}\binom nj
 \le\left(\frac{3}{2^{2/3}}\right)^{n+o(n)}.
\end{equation}
The base $3/2^{2/3}$ is strictly smaller than two.

To pass from a slice rank bound to a bound for ordinary sunflowers,
let $T$ faithfully realize the $k$-sunflower relation and let
$\mathcal A$ be a $k$-sunflower-free family. Restrict to a
\emph{uniform layer}, consisting of the sets in $\mathcal A$ of one fixed size.
If two sets in a tuple satisfying the sunflower relation are equal to
$C$, then $C$ is the common intersection and is contained in every set
in the tuple. If all sets have the same size, they must all equal $C$.
Since $\mathcal A$ contains no sunflower of pairwise distinct sets,
the restricted tensor is diagonal, with every diagonal entry nonzero.
Each uniform layer therefore has size at most $\sr(T)$. Summing over
the $n+1$ possible sizes gives
\begin{equation}\label{eq:family-bound}
 |\mathcal A|\le(n+1)\sr(T).
\end{equation}
Together with~\eqref{eq:three-bound}, this proves the exponential saving
for three sunflowers.

\paragraph{Our results.}
For four or more inputs, the same support condition forces maximal
slice rank.

\begin{theorem}[Full sunflower relation]\label{thm:main}
Let $\F$ be any field, $k\ge4$, and $n\ge0$. Every tensor
$T:(\{0,1\}^n)^k\to\F$ that faithfully realizes the sunflower
relation satisfies
\[
 \sr(T)=2^n.
\]
\end{theorem}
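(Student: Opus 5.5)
The plan is to prove the lower bound $\sr(T)\ge 2^n$; the upper bound $\sr(T)\le 2^n=|X|$ is the trivial one, obtained by slicing along one direction. For the lower bound I will not use any algebraic structure of $T$, since its nonzero values are arbitrary. Instead I will use a purely support-based lower bound of the type introduced by Sawin and Tao. The version I intend to invoke is the following. Fix total orders $\le_1,\ldots,\le_k$ on $X=\{0,1\}^n$, and let $\Gamma$ be the set of maximal elements of $\operatorname{supp}(T)$ in the product order. If $\Gamma$ is an antichain, then
\[
 \sr(T)\ge\min_{\Gamma=\Gamma_1\cup\cdots\cup\Gamma_k}\sum_{i=1}^k|\pi_i(\Gamma_i)|,
\]
where $\pi_i$ is projection to the $i$th coordinate. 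Faithful realization gives exactly the support information this needs: the support is the sunflower relation itself, regardless of the field or the nonzero values. This is why the conclusion can hold over every field.

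The key step is to choose the orders so that the maximal elements are understood. A tuple in the support is determined by a core $C$ and pairwise disjoint petals $P_1,\ldots,P_k$ outside $C$, with $x_i=C\cup P_i$ (petals may be empty, which allows repetitions). I will order the first two coordinates by a linear extension of inclusion, with larger sets higher. I will order the remaining $k-2\ge2$ coordinates by a linear extension of reverse inclusion, with smaller sets higher.

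In these orders one increases a tuple by enlarging $P_1$ and $P_2$ and by shrinking $P_3,\ldots,P_k$ toward the core. Moving an element between the core and the petals is where the interplay occurs. A tuple with two nonempty \emph{reverse-ordered} petals cannot be pushed up into a diagonal tuple. The reason is that it would have to gain elements in coordinates $1,2$ while losing them in coordinates $3,4$. This is exactly where $k\ge4$ enters. For $k=3$ only one coordinate is reversed, and the Naslund--Sawin construction shows that the argument must fail there.

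I will therefore identify $\Gamma$ concretely. It should consist of tuples with $P_3=\cdots=P_k=\emptyset$ and $P_1\cup P_2=[n]\setminus C$. I will check that this set is an antichain and that every support tuple lies below one of its elements.

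The remaining, and main, step is the covering bound. Here $\Gamma$ is parametrized by ordered partitions $[n]=C\sqcup P_1\sqcup P_2$, and the claim to prove is that every split $\Gamma=\Gamma_1\cup\cdots\cup\Gamma_k$ has $\sum_i|\pi_i(\Gamma_i)|\ge2^n$. The projections are as follows:
\begin{itemize}
\item $\pi_1$ records $C\cup P_1$;
\item $\pi_2$ records $C\cup P_2$;
\item $\pi_3,\ldots,\pi_k$ each record only $C$.
\end{itemize}
Coordinates $3,\ldots,k$ all see the same datum, so they behave as one coordinate. The problem thus reduces to a three-set covering statement on ordered partitions.

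I would try to prove this by induction on $n$. Split according to the status of the element $n$, which lies in $C$, in $P_1$, or in $P_2$. The aim is to set up a recursion of the form $f(n)\ge 2f(n-1)$.

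If the induction does not close, my fallback is to abandon this particular $\Gamma$. I would instead pick a sub-antichain on which the projections are injective in a matched way. For example, take the tuples with $C=\emptyset$ and $P_2=[n]\setminus P_1$. These are indexed by $P_1$ and have $\pi_1,\pi_2$ injective and $\pi_3,\ldots,\pi_k$ constant. The obstacle in that case is the opposite one: the constant projections make covering cheap. So the real difficulty is finding an antichain on which every coordinate projection is essentially injective, and I expect the use of $k\ge4$ to be concentrated in that choice.
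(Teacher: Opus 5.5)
There is a genuine gap, and it lies in the step you call the main one: the covering bound you need is false for your $\Gamma$. On an ordered partition $[n]=C\sqcup P_1\sqcup P_2$ the projections are $\pi_1=C\cup P_1=[n]\setminus P_2$, $\pi_2=[n]\setminus P_1$, and $\pi_\ell=C$ for $\ell\ge3$. One of $C,P_1,P_2$ has size at most $n/3$. So put $\gamma$ into $\Gamma_3$ if $|C|\le n/3$, else into $\Gamma_1$ if $|P_2|\le n/3$, else into $\Gamma_2$, with $\Gamma_\ell=\varnothing$ for $\ell\ge4$. This covers $\Gamma$ at total cost at most $3\sum_{j=0}^{\lfloor n/3\rfloor}\binom nj$, which is exactly the Naslund--Sawin count \eqref{eq:three-bound}. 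It equals $390<512$ at $n=9$ and is $(3/2^{2/3})^{n+o(n)}$ in general.

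The conclusion does not depend on which linear extensions you choose. A support tuple with a nonempty petal $P_\ell$ for some $\ell\ge3$, or with an element lying in no set, is strictly below the tuple obtained by moving that element into $P_1$. Hence the true set of maximal elements is always contained in your $\Gamma$. Restricting the cover above to it shows these orders can never certify more than about $1.89^n$, so the recursion $f(n)\ge 2f(n-1)$ cannot hold.

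This is the precise form of your own remark that coordinates $3,\ldots,k$ collapse into one: the covering problem is literally the one you would get for $k=3$. So the sanity check you apply to $k=3$ rules out your argument for $k\ge4$ as well. The heuristic about two reversed petals never enters, because maximal elements have no reversed petals at all. Separately, for arbitrary linear extensions your set need not even be an antichain: two incomparable sets of equal size can be ordered the same way by $\le_2$ and by $\le_3,\ldots,\le_k$.

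The fallback is not yet an argument. As you note, the suggested sub-antichain has constant projections in directions $3,\ldots,k$. You would also first need a product restriction of $T$ whose maximal support elements form that sub-antichain.

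What is missing is an argument that sees more than where the maximal support elements lie. The paper inducts on $n$ by splitting on the first coordinate. The blocks $T_\omega$ with $\omega$ of weight $2,\ldots,k-1$ vanish, while $T_{1^k}$ and each $T_{e_i}$ faithfully realize the relation on $n-1$ coordinates. In a minimum decomposition, each factor space $V_i$ is split into a part supported on $X_0$, a part supported on $X_1$, and a mixed complement, of dimensions $a_i,b_i,m_i$. Lemma~\ref{lem:cancel} then gives maps that fix functions supported on one half and kill the rest of $V_i$. Every correction term these maps introduce evaluates $T$ on a vanishing block, so they recover $T_{1^k}$ and $T_{e_i}$ exactly while discarding slices. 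With $s=2^{n-1}$ this yields $s\le B+2M/k$ and $ks\le B+(k-1)A+2M$. These combine to $2s\le A+B+\tfrac4kM$, and $k\ge4$ is used precisely to bound the right side by $A+B+M=r$.
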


The conclusion is independent of the nonzero coefficients and the
coefficient field. The obstruction to exponential savings also holds
under the weaker requirement that only distinct inputs be prescribed.

\begin{corollary}[Distinct sunflower inputs]\label{cor:distinct-sunflower}
Let $\F$ be any field, $k\ge4$, and $n\ge k$. If
$T:(\{0,1\}^n)^k\to\F$ realizes the sunflower relation on distinct
inputs, then
\[
 \sr(T)\ge 2^{n-k}.
\]
\end{corollary}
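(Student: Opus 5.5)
The plan is to deduce the corollary from Theorem~\ref{thm:main} by restricting $T$ to a product of subsets on which every tuple is automatically pairwise distinct. On such a product the distinct-input condition becomes a faithful-realization condition. We use the first $k$ coordinates as ``tags'' that force distinctness without affecting membership in the sunflower relation, and the remaining $n-k$ coordinates carry a copy of the full problem.

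Concretely, for $i\in[k]$ let $u_i\in\{0,1\}^k$ be the $i$th unit vector. Let $A_i=\{(u_i,y): y\in\{0,1\}^{n-k}\}\subseteq\{0,1\}^n$, which uses $n\ge k$. Define
\[
 T'(y_1,\ldots,y_k)=T\bigl((u_1,y_1),\ldots,(u_k,y_k)\bigr),\qquad y_i\in\{0,1\}^{n-k}.
\]
We then check two facts.
\begin{itemize}
\item Any tuple with $x_i\in A_i$ has pairwise distinct entries, since the tags $u_i$ are distinct. So the hypothesis applies to every such tuple: $T$ is nonzero there exactly when the tuple lies in the sunflower relation.
\item For each tag coordinate $j\le k$, the column $(x_{1,j},\ldots,x_{k,j})$ equals $e_j\in\cS_k$. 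Hence the tuple lies in the sunflower relation on $\{0,1\}^n$ if and only if $(y_1,\ldots,y_k)$ lies in the sunflower relation on $\{0,1\}^{n-k}$.
\end{itemize}
Together these show that $T'$ faithfully realizes the $k$-sunflower relation on $(\{0,1\}^{n-k})^k$. Theorem~\ref{thm:main}, applied with $n-k\ge0$, then gives $\sr(T')=2^{n-k}$.

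It remains to see that $\sr(T)\ge\sr(T')$. Restricting each input independently to $A_i$ cannot increase slice rank, as noted in the paper: each slice $f(x_i)G(\xx_{-i})$ restricts to a slice or to zero. Identifying each $A_i$ with $\{0,1\}^{n-k}$ through its own bijection $y\mapsto(u_i,y)$ also preserves slice rank, because relabeling a single input maps slices in direction $i$ to slices in direction $i$ bijectively. The definition of slice rank is stated for a common domain $X$, but the argument clearly works with separate domains per coordinate, so no care beyond this is needed.

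The only real point is choosing the tags so that they simultaneously force distinctness and keep every tag column inside $\cS_k$. The unit-vector tags do both, so I expect no genuine obstacle once Theorem~\ref{thm:main} is available.
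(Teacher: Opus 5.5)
Your proposal is correct and follows essentially the same argument as the paper. Like the paper, you tag the $i$th input with the unit vector $e_i$ in the first $k$ coordinates, which forces pairwise distinctness and makes each tag column equal to $e_j\in\cS_k$; the restricted tensor then faithfully realizes the relation on $n-k$ coordinates, and Theorem~\ref{thm:main} together with monotonicity of slice rank under restriction gives the bound.
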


For fixed $k$, the second bound is a constant fraction of the ambient
size. In particular, no family of these tensors can have slice rank
$O(c^n)$ for any $c<2$, even if all entries involving repetitions
are chosen freely. Both statements are proved in
Section~\ref{sec:sunflower-proof}.

\subsection{Arithmetic progressions}

Let $G$ be a finite abelian group, written additively. The
\emph{$k$-term arithmetic progression relation} consists of the tuples
\[
 (a,a+h,\ldots,a+(k-1)h),\qquad a,h\in G.
\]
We include constant progressions ($h=0$) and progressions with
repetitions caused by torsion. The coefficient field $\F$ is independent of the group $G$;
in particular, no assumption on its characteristic is made.

\begin{theorem}[Full progression relation]\label{thm:ap}
Let $G$ be a finite abelian group, $\F$ any field, and $k\ge4$.
If $T:G^k\to\F$ faithfully realizes the $k$-term arithmetic progression
relation, then
\[
 \sr(T)=|G|.
\]
\end{theorem}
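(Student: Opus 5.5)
The plan is to suppose $\sr(T)=r<|G|$ and derive a contradiction by contracting $T$ down to a matrix, where slice rank is ordinary rank. The upper bound $\sr(T)\le|G|$ is the trivial slicing along one direction. Fix a representation $T=\sum_{i=1}^k\sum_{j\le r_i} f_{i,j}(x_i)G_{i,j}(\xx_{-i})$ with $r_1+\cdots+r_k=r$.

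The tool is Tao's contraction argument from the diagonal lemma. For each direction $\ell$, let $V_\ell\subseteq\F^G$ be the space of functions $v$ with $\sum_x v(x)f_{\ell,j}(x)=0$ for all $j\le r_\ell$. Then $\dim V_\ell\ge |G|-r_\ell$. Contracting coordinate $\ell$ of $T$ against any $v\in V_\ell$ kills every slice in direction $\ell$ and leaves slices in the other directions as slices. I will contract all coordinates except two, say $x_1$ and $x_2$, against chosen $v_\ell\in V_\ell$ for $\ell=3,\ldots,k$. The result is a matrix $M(x,y)$ of rank at most $r_1+r_2\le r<|G|$.

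The faithful-realization hypothesis now gives $M$ a rigid shape. Two consecutive terms of a progression determine all the others, so
\[
 M(x,y)=T\bigl(x,y,2y-x,\ldots,x+(k-1)(y-x)\bigr)\prod_{\ell=3}^{k}v_\ell\bigl(x+(\ell-1)(y-x)\bigr).
\]
Every $T$-factor here is nonzero. Hence $M(x,y)\ne0$ exactly when $x+(\ell-1)(y-x)\in\operatorname{supp}v_\ell$ for every $\ell\ge3$. Writing $h=y-x$, the question becomes purely combinatorial: choose $v_\ell\in V_\ell$ so that the bipartite support graph $\{(x,x+h): x+(\ell-1)h\in\operatorname{supp}v_\ell\ \forall\ell\ge3\}$ forces $\det M\ne0$. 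The cleanest way is to make the support contain a unique perfect matching, or to make it triangular with respect to some ordering of $G$. Either gives $\rank M=|G|$, contradicting $\rank M<|G|$.

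If needed I will exploit the freedom to choose a different pair of uncontracted coordinates, for instance $x_1,x_k$ or a non-consecutive pair. This is where $k\ge4$ enters: with at least two contracted coordinates there are two independent support constraints to play against each other. For $k=3$ only one constraint remains, and the cap-set construction shows the statement genuinely fails.

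The main obstacle is producing vectors $v_\ell\in V_\ell$ whose supports yield a nonsingular $M$. This must work for arbitrary nonzero values of $T$ and over an arbitrary, possibly finite, field, so a genericity argument is unavailable. My first attempt is an echelon choice. Fix an ordering of $G$ and put a basis of each $V_\ell$ in reduced echelon form with pivot set $P_\ell$, where $|P_\ell|\ge|G|-r_\ell$. The aim is to pick combinations whose supports consist of pivots plus later positions, so that $M$ has a dominant term in a suitable order (a unique lex-extremal matching) and hence $\det M\ne0$ regardless of the actual values.

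If a single contraction cannot be arranged this way, I would fall back on an inductive version. Contract one coordinate at a time, keeping a $(k-1)$-tensor whose support is still an explicitly controlled subset of the progression relation. Then run a Sawin--Tao-style cover argument on that support, bounding $\sum_i|\pi_i(\Gamma_i)|$ from below for any cover of the support by sets $\Gamma_1,\ldots,\Gamma_k$. This uses that every progression, including the constant ones and those with torsion repetitions, carries a nonzero entry.
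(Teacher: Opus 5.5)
\paragraph{There is a genuine gap.} The step you call the main obstacle, choosing $v_\ell\in V_\ell$ so that $\det M\ne0$, cannot be carried out in general, and the reduction to two coordinates is the reason. After contracting to a consecutive pair $(x_1,x_2)$, every $(x,y)\in G^2$ extends to a unique progression. So the support of $M$ is typically dense, and its rank depends on the arbitrary values of $T$.

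Here is a concrete failure. Take $G=(\mathbb Z/2\mathbb Z)^n$ and let $T$ be the indicator of the progression relation, which is faithful. Every progression has the form $(x,y,x,y,\ldots)$, so your formula gives $M(x,y)=\alpha(x)\beta(y)$, where $\alpha=\prod_{\ell\ge3,\ \ell\text{ odd}}v_\ell$ and $\beta=\prod_{\ell\ge4,\ \ell\text{ even}}v_\ell$. Hence $\rank M\le1$ for every choice of the $v_\ell$. Then $M$ is never nonsingular once $|G|\ge2$. Even the weaker inequality $\rank M>r_1+r_2$, which is all you actually need, fails as soon as $r_1+r_2\ge1$.

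The same happens for every consecutive pair. A non-consecutive pair $(x_i,x_j)$ determines the progression only when multiplication by $j-i$ is injective on $G$, so your formula for $M$ breaks there. Consequently a hypothetical decomposition with every $r_\ell\ge1$ cannot be refuted by your argument.

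The fallback is also only a sketch. A support-cover quantity $\sum_i|\pi_i(\Gamma_i)|$ always bounds slice rank from above. It bounds slice rank from below only under a structural hypothesis on the support, such as the Tao--Sawin antichain condition that handles the integers. You verify nothing of that kind for a general finite abelian group.

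\paragraph{The missing idea.} Stop the contraction at four coordinates and flatten as $(x_1,x_2)\mid(x_3,x_4)$.
\begin{itemize}
\item For $k=4$, the map $(x,y)\mapsto(2y-x,\,3y-2x)$ is a bijection of $G^2$, with inverse $(z,w)\mapsto(3z-2w,\,2z-w)$. So the flattening of any faithful $T$ is an $N^2\times N^2$ weighted permutation matrix, of rank $N^2$ whatever the values and the field.
\item Each slice flattens to a matrix of rank at most $N$. Together these give $N^2\le N\,\sr(T)$.
\item For $k>4$, contract only $x_5,\ldots,x_k$, against $h_i\perp V_i$ with at most $r_i$ zeros. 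Such an $h_i$ is the sum of a reduced-echelon basis of $V_i^\perp$. This is where your echelon idea belongs, but it is used to make supports large, not to engineer sparse ones.
\item Each zero of $h_i$ kills at most $N$ of the $N^2$ permutation entries. Hence $N^2-N\sum_{i\ge5}r_i\le\rank\le N\sum_{i\le4}r_i$, which gives $\sr(T)\ge N$.
\end{itemize}
The point is that the $12\mid34$ flattening is sparse because of the relation itself. No control over the values of $T$ is needed, which is exactly what your two-coordinate matrix lacks.
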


In finite vector spaces, the same obstruction persists when only
pairwise distinct inputs are prescribed.

\begin{corollary}[Distinct progression inputs]\label{cor:distinct-ap}
Let $k\ge4$, let $p\ge k$ be prime, and let $n\ge1$.
For any coefficient field $\F$, if
$T:(\mathbb F_p^n)^k\to\F$ realizes the $k$-term arithmetic progression
relation on distinct inputs, then
\[
 \sr(T)\ge p^{n-1}.
\]
\end{corollary}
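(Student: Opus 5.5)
The plan is to reduce Corollary~\ref{cor:distinct-ap} to Theorem~\ref{thm:ap}. We restrict each input of $T$ to a different coset of a hyperplane. Distinct cosets make the inputs automatically pairwise distinct, so the distinct-input hypothesis becomes a faithful realization on a smaller group.

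Write $G=\mathbb F_p^n$, let $e_1$ be the first standard basis vector, and let $H=\{x\in G: x_1=0\}\cong\mathbb F_p^{n-1}$. For $y_1,\ldots,y_k\in H$ define
\[
 T'(y_1,\ldots,y_k)=T\bigl(y_1,\;y_2+e_1,\;y_3+2e_1,\;\ldots,\;y_k+(k-1)e_1\bigr).
\]
The $i$th argument of $T$ lies in the coset $(i-1)e_1+H$. Since $p\ge k$, the scalars $0,1,\ldots,k-1$ are distinct in $\mathbb F_p$, so these $k$ cosets are pairwise distinct. Hence every tuple at which $T'$ is evaluated consists of pairwise distinct elements of $G$. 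The hypothesis then says that $T'(y_1,\ldots,y_k)\neq0$ exactly when $(y_1,y_2+e_1,\ldots,y_k+(k-1)e_1)$ is a $k$-term progression in $G$.

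Next I check that this condition is equivalent to $(y_1,\ldots,y_k)$ being a $k$-term progression in $H$. A tuple $(x_1,\ldots,x_k)$ is a progression exactly when the consecutive differences $x_{i+1}-x_i$ are all equal. Here these differences are $y_{i+1}-y_i+e_1$, and they are all equal if and only if the differences $y_{i+1}-y_i$ are all equal. Moreover, if $y_{i+1}-y_i=h$ for all $i$, then $h\in H$, so the common difference lies in $H$, and conversely. Thus $T'$ faithfully realizes the $k$-term progression relation on the finite abelian group $H$ over $\F$. Theorem~\ref{thm:ap} then gives $\sr(T')=|H|=p^{n-1}$. This includes the case $n=1$, where $H$ is trivial and the bound is $1$.

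Finally, $T'$ is obtained from $T$ by restricting each input independently to a subset of $G$, namely the $i$th input to $(i-1)e_1+H$, and then relabeling that subset via the bijection $y\mapsto y+(i-1)e_1$ from $H$. Relabeling inputs coordinatewise maps slices to slices, and, as noted in the introduction, independent restriction cannot increase slice rank. Therefore $\sr(T)\ge\sr(T')=p^{n-1}$.

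The only delicate points are the equivalence of progression membership under the coordinatewise shifts, and the use of $p\ge k$ to guarantee distinctness. Both are routine, so I expect no serious obstacle beyond Theorem~\ref{thm:ap} itself.
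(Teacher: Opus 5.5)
Your proposal is correct and is essentially the paper's own proof. Both restrict the $i$th input to the coset of $\{x_1=0\}$ with first coordinate $i-1$, using $p\ge k$ to make all queried inputs pairwise distinct, and observe that the restricted tensor faithfully realizes the progression relation on $\mathbb F_p^{n-1}$. Both then conclude via Theorem~\ref{thm:ap} and monotonicity of slice rank under restriction.
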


We prove Theorem~\ref{thm:ap} and Corollary~\ref{cor:distinct-ap}
in Section~\ref{sec:ap}.
We note that for finite subsets of the integers, the corresponding maximal slice
rank bound follows from Tao and Sawin~\cite[Proposition~4]{TaoSawin16},
already for $k\ge3$.

\paragraph{Partition rank.}
Partition rank, introduced by Naslund~\cite{Nas20}, is the minimum
number of terms $f((x_i)_{i\in I})g((x_i)_{i\notin I})$ needed to
express a tensor $T$, where $\varnothing\ne I\subsetneq[k]$ may vary
between terms. We denote it by $\pr(T)$. Every slice is such a term,
so $\pr(T)\le\sr(T)$. Partition rank does not increase under
restriction, and a diagonal tensor with $N$ nonzero entries has
partition rank $N$~\cite[Lemma~11]{Nas20}. Thus the same restriction
argument can bound configuration-free sets using partition rank in
place of slice rank, potentially giving stronger bounds. In further
work with ChatGPT, we obtained a lower bound of $1.96^n$ for
$4$-sunflowers, while the trivial upper bound is $2^n$. We omit the
proof, as it is technical and offers little additional intuition.
It remains open to determine the asymptotic minimum partition rank
of tensors faithfully realizing the sunflower and arithmetic
progression relations for $k\ge4$. Already for $4$-sunflowers, must
this minimum be $2^{n-o(n)}$, or can it be $O(c^n)$ for some $c<2$?

\paragraph{AI methodology.}

We used ChatGPT-6 Astra to explore proof strategies, develop and check
candidate arguments, and iteratively refine the writing. During the exploratory
stage, multiple AI agents investigated different approaches and
cross-checked the sunflower lower-bound argument. The proofs and their
presentation were then refined through an interactive exchange of
mathematical questions, proposed reductions, and requests to clarify
individual steps. Particular emphasis was placed on expressing the
sunflower proof directly in terms of slice decompositions and making
the support conditions explicit. 

\section{Sunflowers}\label{sec:sunflower-proof}

We first prove Theorem~\ref{thm:main}, then deduce
Corollary~\ref{cor:distinct-sunflower} by distinguishing the inputs
with fixed prefixes.

The proof of the theorem is an induction on $n$. Splitting the inputs
according to their first coordinate divides $T$ into blocks. Each
nonzero block has slice rank $2^{n-1}$ by induction. The main
step is to show that the original tensor needs at least twice as many
slices. A single slice can contribute to several blocks, so we cannot
simply add their ranks. Instead, we use linear maps that recover
selected blocks while removing many slices from a given decomposition,
and combine the resulting inequalities.

\subsection{The first-coordinate blocks}\label{sec:blocks}

To describe the induction step, fix a field $\F$, integers $k\ge4$ and
$n\ge1$, and a tensor $T$ satisfying~\eqref{eq:faithful}.
Let $X=\{0,1\}^n$ and write
\[
  X_0=\{(0,u):u\in\{0,1\}^{n-1}\},\qquad
  X_1=\{(1,u):u\in\{0,1\}^{n-1}\}.
\]
We identify each $X_\varepsilon$ with $\{0,1\}^{n-1}$ by deleting
its first coordinate. The partition $\{0,1\}^n=X_0\sqcup X_1$ divides
$T$ into $2^k$ subtensors, one for each $\omega\in\{0,1\}^k$:
\begin{equation}\label{eq:blocks}
  T_\omega(u_1,\ldots,u_k)
  =T\bigl((\omega_1,u_1),\ldots,(\omega_k,u_k)\bigr),
  \qquad u_i\in\{0,1\}^{n-1}.
\end{equation}
Their support is completely determined by~\eqref{eq:faithful}.
If $\omega\notin\cS_k$, the first coordinate is forbidden, and
$T_\omega$ is identically zero. Equivalently, all blocks indexed by
patterns of weight $2,\ldots,k-1$ vanish.

If $\omega\in\cS_k$, the first coordinate already satisfies the
sunflower condition. Hence
\begin{equation}\label{eq:block-support}
  T_\omega(u_1,\ldots,u_k)\ne0
  \quad\Longleftrightarrow\quad
  (u_{1,j},\ldots,u_{k,j})\in\cS_k
  \quad\text{for all }j\in[n-1].
\end{equation}
Thus each of the $k+2$ allowed blocks faithfully realizes the same
relation on $n-1$ coordinates. In particular,
\[
  T_{e_1},\ldots,T_{e_k},T_{1^k}
\]
satisfy every hypothesis needed for the induction. Their nonzero
values can differ from block to block; the inductive assertion covers
all such choices. The block $T_{0^k}$ also has this property, but its
slice rank is not needed in the proof.

\subsection{Organizing and cancelling the slices}\label{sec:cancellation}

Our maps will retain one half of the inputs and cancel selected
factors by subtracting linear combinations of values on the other
half. The following lemma supplies the required maps.
For a finite index set $Y$,
write $\F^Y$ for the space of functions $Y\to\F$. If $Z\subseteq Y$,
we identify $\F^Z$ with the coordinate subspace of $\F^Y$ consisting
of functions supported on $Z$.

\begin{lemma}[Cancelling a subspace]\label{lem:cancel}
Let $[N]=A\sqcup B$, let $V\subseteq\F^N$ be a subspace, and put
$V_A=V\cap\F^A$. For any complement $U$ with $V=V_A\oplus U$, there
is a linear map $C:\F^N\to\F^A$ such that
\[
  C(a)=a\quad\text{for every }a\in\F^A,
  \qquad
  C(u)=0\quad\text{for every }u\in U.
\]
\end{lemma}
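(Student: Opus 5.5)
The plan is to build $C$ directly from a decomposition of $\F^N$ that is adapted to both $\F^A$ and $U$. The key observation is that $U\cap\F^A=0$. Indeed, $U\subseteq V$, so $U\cap\F^A\subseteq V\cap\F^A=V_A$. Since $V=V_A\oplus U$, we also have $U\cap V_A=0$, and therefore $U\cap\F^A=0$. Consequently the sum $W=\F^A+U$ is direct, that is, $W=\F^A\oplus U$.

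Next choose any complement $W'$ of $W$ in $\F^N$, so that $\F^N=\F^A\oplus U\oplus W'$; such a complement exists because we are working with finite-dimensional vector spaces over a field. Every vector $v\in\F^N$ then decomposes uniquely as $v=a+u+w$ with $a\in\F^A$, $u\in U$ and $w\in W'$. Define $C(v)=a$. This map is linear, being the projection onto the first summand of a direct sum. By construction it restricts to the identity on $\F^A$ and vanishes on $U$, as the lemma requires. The value of $C$ on $W'$ is irrelevant; one could equally send $W'$ to any element of $\F^A$ by an arbitrary linear map.

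There is essentially no obstacle. The only point needing care is the trivial-intersection step $U\cap\F^A=0$, which is exactly where the hypotheses that $U$ complements $V_A$ inside $V$, and that $V_A=V\cap\F^A$, are used. The partition $[N]=A\sqcup B$ plays no role beyond identifying $\F^A$ as a coordinate subspace. In later applications, one presumably wants $C$ to be expressed as ``retain the $A$-coordinates and subtract combinations of the $B$-coordinates.'' If that form is needed, it can be arranged by taking $W'=\F^B\cap(\text{suitable complement})$ or simply by noting that any such projection has that shape. For the lemma as stated, however, the projection argument suffices.
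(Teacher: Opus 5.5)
Your proof is correct and is essentially the paper's: both note $U\cap\F^A\subseteq V\cap\F^A=V_A$, so $U\cap\F^A=\{0\}$. Both then define $C(a+u)=a$ on $\F^A\oplus U$ and extend linearly to $\F^N$; your complement $W'$ is exactly that extension step. Your aside about choosing $W'$ inside $\F^B$ is unnecessary, but your other remark is the right one: any linear map fixing $\F^A$ is automatically the restriction to $A$ plus a correction depending only on the $B$-coordinates, which is how the paper uses the lemma.
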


\begin{proof}
Since $U\subseteq V$, we have $U\cap\F^A=U\cap V_A=\{0\}$.
Thus $C(a+u)=a$ defines a linear map on $\F^A\oplus U$.
Extend this map linearly to $\F^N$.
\end{proof}

We apply the lemma to the one-variable factors in a slice decomposition.
Choose a minimum decomposition
\begin{equation}\label{eq:decomposition}
  T(x_1,\ldots,x_k)
  =
  \sum_{i=1}^k\sum_{j=1}^{r_i}
  f_{i,j}(x_i)G_{i,j}(\xx_{-i}),
  \qquad
  r:=\sr(T)=\sum_{i=1}^k r_i.
\end{equation}
Within each direction the factors $f_{i,j}$ are linearly independent:
otherwise we could combine terms and reduce the number of slices.
Changing their basis changes the complementary factors $G_{i,j}$ but
preserves the decomposition and its number of terms. Let
\[
  V_i=\operatorname{span}_{\F}\{f_{i,1},\ldots,f_{i,r_i}\}
  \subseteq\F^X.
\]
In $V_i$, first choose bases of $V_i\cap\F^{X_0}$ and
$V_i\cap\F^{X_1}$. These two subspaces have zero intersection.
Extend the union of their bases to a basis of $V_i$. Denote the
dimensions of the two supported subspaces and their complement by
\[
  a_i=\dim(V_i\cap\F^{X_0}),\qquad
  b_i=\dim(V_i\cap\F^{X_1}),\qquad
  m_i=r_i-a_i-b_i.
\]
We call the last $m_i$ basis factors \emph{mixed}. Each has a nonzero
restriction to both halves, so restrictions to either half retains it.
The maps below will cancel these mixed factors. Let
\begin{equation}\label{eq:totals}
  A=\sum_i a_i,\qquad B=\sum_i b_i,\qquad M=\sum_i m_i,
  \qquad r=A+B+M.
\end{equation}

Fix $i\in[k]$ and $\varepsilon\in\{0,1\}$. Apply Lemma~\ref{lem:cancel}
with coordinates indexed by $X$, with $V=V_i$, and with the
partition $X=X_\varepsilon\sqcup X_{1-\varepsilon}$. Take $U$ to be
the span of the mixed factors and the factors supported on
$X_{1-\varepsilon}$. Our choice of basis gives
\[
  V_i=(V_i\cap\F^{X_\varepsilon})\oplus U.
\]
The resulting map $C_i^\varepsilon:\F^X\to\F^{X_\varepsilon}$
fixes the basis factors supported on $X_\varepsilon$ and kills all
the other basis factors, since they belong to $U$.

Because $C_i^\varepsilon$ fixes \emph{every} function supported on
$X_\varepsilon$, it is the restriction to $X_\varepsilon$ plus a linear
correction depending only on the opposite half. In coordinates, there
are coefficients $\lambda_{t,u}\in\F$ such that
\begin{equation}\label{eq:map-form}
  (C_i^\varepsilon f)(t)
  =f(t)-\sum_{u\in X_{1-\varepsilon}}\lambda_{t,u}f(u),
  \qquad t\in X_\varepsilon.
\end{equation}

Apply $C_i^\varepsilon$ to a tensor by acting on input $i$ and holding
all other inputs fixed. For a direction-$i$ slice, this replaces its
one-variable factor $f$ by $C_i^\varepsilon f$. If $f\in U$, the
resulting slice is zero. Hence at most $a_i$
slices in that direction remain when $\varepsilon=0$, and at most
$b_i$ when $\varepsilon=1$. For a slice in direction $j\ne i$, its
one-variable factor is unchanged and only its complementary factor is
modified. Thus the number of slices in any other direction cannot
increase. The operations in distinct directions commute, so these
conclusions remain valid when several of them are applied successively.

These operations need not preserve a tensor's values. In the proof
below, we arrange that every correction term evaluates $T$ on a
forbidden block, where it is zero. The maps then recover the desired
block while still cancelling the specified slices.

\subsection{The inductive proof}\label{sec:proof}

\begin{proof}[Proof of Theorem~\ref{thm:main}]
For $n=0$, the tensor is a nonzero scalar, so its slice rank is one.
Suppose $n\ge1$ and the result holds for $n-1$. Put
$s=2^{n-1}$, and organize a minimum slice decomposition of $T$ as above.
We will prove $r\ge2s$. The reverse inequality follows by slicing along
any one direction, which has $2s$ possible inputs.

Among the $2^k$ blocks defined in Section~\ref{sec:blocks}, set
\[
  H=T_{1^k},\qquad S_i=T_{e_i}\quad(1\le i\le k).
\]
Each satisfies the hypotheses of Theorem~\ref{thm:main} on $n-1$
coordinates, by~\eqref{eq:block-support}. Their nonzero coefficients
may be arbitrary and unrelated across blocks. By induction,
\begin{equation}\label{eq:child-ranks}
  \sr(H)=\sr(S_i)=s
  \qquad (1\le i\le k).
\end{equation}

\Needspace{7\baselineskip}
\medskip
\noindent\emph{The all-one block.}
Choose distinct directions $i,j \in [k]$. In these two directions, simply
restrict the inputs to $X_1$. In every other direction $\ell$, apply
$C_\ell^1$ constructed in Section~\ref{sec:cancellation}, retaining $X_1$ and
using evaluations on $X_0$.
The two ordinary restrictions preserve two first-coordinate ones.
Every correction inserts at least one zero, so its pattern will have
between two and $k-1$ ones and hence belong to a zero block. We now
write this out explicitly.

Let $I=[k]\setminus\{i,j\}$, and denote the resulting tensor on
$X_1^k$ by $\widetilde H$. For $\ell\in I$, write
$\lambda^{(\ell)}_{t,u}$ for the coefficients of $C_\ell^1$
in~\eqref{eq:map-form}. Fix $\mathbf t=(t_1,\ldots,t_k)\in X_1^k$.
For $J\subseteq I$ and $\mathbf u=(u_\ell)_{\ell\in J}\in X_0^J$,
let $\mathbf t^{J,\mathbf u}$ be the tuple whose $\ell$th input is
$u_\ell$ if $\ell\in J$, and $t_\ell$ otherwise. Distributing the
restriction and correction terms in~\eqref{eq:map-form} gives
\begin{equation}\label{eq:all-one-expansion}
  \widetilde H(\mathbf t)
  =\sum_{J\subseteq I}(-1)^{|J|}
    \sum_{\mathbf u\in X_0^J}
    \left(\prod_{\ell\in J}\lambda^{(\ell)}_{t_\ell,u_\ell}\right)
    T(\mathbf t^{J,\mathbf u}).
\end{equation}
Here $J$ records precisely the directions in which the correction
term is chosen. The term with $J=\varnothing$ is $T(\mathbf t)$.
If $J\ne\varnothing$, the first-coordinate pattern of
$\mathbf t^{J,\mathbf u}$ has zeros exactly in the positions in $J$.
Its number of ones is therefore $k-|J|$. Since
$1\le |J|\le k-2$, this number lies between $2$ and $k-1$.
Thus the pattern is forbidden and
$T(\mathbf t^{J,\mathbf u})=0$ for every such $J$ and $\mathbf u$.
It follows that $\widetilde H(\mathbf t)=T(\mathbf t)$ for every
$\mathbf t\in X_1^k$, so the resulting tensor is exactly the
restriction $H$.

We can now count the slices that remain in its decomposition.
In every direction $\ell\notin\{i,j\}$, only the $b_\ell$
factors supported on $X_1$ survive. In directions $i,j$, the
restriction to $X_1$ leaves at most $b_i+m_i$ or $b_j+m_j$ factors.
Consequently,
\[
  s=\sr(H)\le B+m_i+m_j.
\]
Each $m_\ell$ occurs in $k-1$ of the $\binom{k}{2}$ inequalities.
Averaging over the unordered pairs $\{i,j\}$ therefore gives
\begin{equation}\label{eq:all-one}
  s\le B+\frac{2M}{k}.
\end{equation}

\Needspace{7\baselineskip}
\medskip
\noindent\emph{The singleton blocks.}
Fix $i$, and choose $j\ne i$. In direction $i$, simply restrict to
$X_1$; in direction $j$, simply restrict to $X_0$.
In every other direction $\ell$, apply $C_\ell^0$, retaining
$X_0$ and using evaluations on $X_1$.
The ordinary restrictions now preserve one first-coordinate one and
one zero. Every correction adds another one, again forcing a forbidden
pattern.

Expand as in~\eqref{eq:all-one-expansion}, with
$I=[k]\setminus\{i,j\}$, now using the coefficients of $C_\ell^0$.
The target inputs satisfy $t_i\in X_1$ and $t_\ell\in X_0$ for
$\ell\ne i$, and the substituted inputs satisfy $u_\ell\in X_1$.
The term indexed by $J=\varnothing$ is the unmodified value of
$S_i$. In every term indexed by a nonempty $J\subseteq I$, the
first-coordinate pattern has ones exactly in $\{i\}\cup J$.
It therefore has $1+|J|$ ones, again between $2$ and $k-1$.
The corresponding value of $T$ is zero, so the resulting tensor is
exactly $S_i$.

In direction $i$, at most $b_i+m_i$ factors survive. In direction
$j$, at most $a_j+m_j$ survive. In every other direction $\ell$,
only the $a_\ell$ factors supported on $X_0$ remain. Therefore
\begin{equation}\label{eq:singleton}
  s=\sr(S_i)
  \le b_i+\sum_{\ell\ne i}a_\ell+m_i+m_j.
\end{equation}
For each $i$, take $j=i+1$, with indices read cyclically modulo $k$,
and sum. Thus each direction plays the role of $i$ once and of $j$
once. Each $a_\ell$ is counted $k-1$ times, each $b_\ell$ once,
and each $m_\ell$ twice. Thus
\begin{equation}\label{eq:singleton-sum}
  ks\le B+(k-1)A+2M.
\end{equation}

\Needspace{6\baselineskip}
\medskip
\noindent\emph{Combining the two counts.}
Multiply \eqref{eq:all-one} by $k-2$ and add
\eqref{eq:singleton-sum}, so that $A$ and $B$ have the same
coefficient. This yields
\[
  2(k-1)s
  \le (k-1)(A+B)+\left(4-\frac4k\right)M.
\]
Dividing by $k-1$, we obtain
\[
  2s\le A+B+\frac4kM
  \le A+B+M=r,
\]
where the last inequality uses $k\ge4$. This proves the induction.
\end{proof}

All averages and divisions in the proof concern ordinary integer
dimensions and slice counts, not elements of $\F$. The proof therefore
works in every characteristic.

\begin{remark}[The unused block]
The proof never uses $T_{0^k}$. In fact, replacing that entire block by
an arbitrary tensor leaves the conclusion unchanged, provided all other
blocks remain as above. Indeed, the inductive lower bounds only use
$T_{1^k}$, the $T_{e_i}$, and the all-zero sub-tensors of $T$. Every correction term has between
two and $k-1$ ones in its first-coordinate pattern, so none reaches
$T_{0^k}$.
\end{remark}

\subsection{Distinct inputs}

\begin{proof}[Proof of Corollary~\ref{cor:distinct-sunflower}]
Use the first $k$ coordinates to distinguish the inputs. For
$u_1,\ldots,u_k\in\{0,1\}^{n-k}$, define the subtensor
\[
 S(u_1,\ldots,u_k)
 =T\bigl((e_1,u_1),\ldots,(e_k,u_k)\bigr),
 \qquad e_i\in\{0,1\}^k.
\]
The $k$ prefixed inputs are pairwise distinct for every choice of the
$u_i$, even when some or all of the $u_i$ coincide. In prefix coordinate
$j$, their pattern is $e_j$, which is allowed. Therefore
\[
 S(u_1,\ldots,u_k)\ne0
 \quad\Longleftrightarrow\quad
 (u_1,\ldots,u_k)\text{ satisfies the sunflower relation}.
\]
Thus $S$ faithfully realizes the full relation on $n-k$ coordinates.
By Theorem~\ref{thm:main} and monotonicity under restriction,
$2^{n-k}=\sr(S)\le\sr(T)$.
\end{proof}

\section{Arithmetic progressions}\label{sec:ap}

We first prove Theorem~\ref{thm:ap}. With four inputs, we view the
tensor as a matrix of rank $|G|^2$, while each slice gives a matrix of
rank at most $|G|$. With more inputs, we sum against functions that annihilate all
slices in the extra directions while preserving enough nonzero matrix
entries. We then apply the same matrix argument and finally deduce
Corollary~\ref{cor:distinct-ap}.

\subsection{Four inputs: a matrix of full rank}

\begin{proof}[Proof of Theorem~\ref{thm:ap} for $k=4$]
Put $N=|G|$. For a four-tensor $S$, group the first two inputs as a row
index and the last two as a column index. The resulting matrix, called
the $12\mid34$ \emph{flattening}, is
\[
 M_S\bigl((x_1,x_2),(x_3,x_4)\bigr)=S(x_1,x_2,x_3,x_4).
\]
This is an $N^2\times N^2$ matrix. If $S$ faithfully realizes the
four-term progression relation, then row $(x,y)$ has exactly one
nonzero entry, in column
\[
 \phi(x,y)=(2y-x,\,3y-2x).
\]
The map $\phi:G^2\to G^2$ is bijective: its inverse sends $(z,w)$ to
$(3z-2w,\,2z-w)$. These formulas use only integer multiples in $G$,
so they hold in every finite abelian group. Thus $M_S$ is a weighted
permutation matrix and has rank $N^2$ over every coefficient field.

In contrast, the flattening of each slice has matrix rank at most
$N$. For a direction-one slice, for example,
\[
 f(x_1)H(x_2,x_3,x_4)
 =\sum_{b\in G}
   \bigl(f(x_1)\mathbf1_{x_2=b}\bigr)H(b,x_3,x_4)
\]
is a sum of $N$ matrices of rank at most one. The same argument applies
to direction two. Directions three and four follow by transposing the
matrix. By subadditivity of matrix rank, a decomposition of $S$ into
$R$ slices therefore implies
\begin{equation}\label{eq:flatten-upper}
 \rank(M_S)\le NR.
\end{equation}
For a faithful progression tensor, this gives $N^2\le N\sr(S)$,
proving the theorem when $k=4$.
\end{proof}

\subsection{A linear functional with few zero coordinates}

For functions $f,h:X\to\F$, use the bilinear pairing
$\langle f,h\rangle=\sum_{x\in X}f(x)h(x)$.
Summing against $h$ removes a slice with factor $f$ whenever
$\langle f,h\rangle=0$. We need such a function $h$ that vanishes at
few points: these zeros will be the only reason a progression entry is
lost. The following lemma gives both properties, with the size of the
zero set controlled by the dimension of the space to be annihilated.

\begin{lemma}\label{lem:annihilator}
Let $X$ have size $N$, and let $V\subseteq\F^X$ have dimension $r$.
There is a function $h:X\to\F$ such that
\[
 \langle f,h\rangle=0\quad(f\in V),
 \qquad
 |\{x\in X:h(x)=0\}|\le r.
\]
\end{lemma}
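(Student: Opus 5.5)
The lemma asks for a vector $h$ in the annihilator
\[
 V^\perp=\{h\in\F^X:\langle f,h\rangle=0\text{ for all }f\in V\}
\]
with few zero coordinates. Since the pairing is nondegenerate, $\dim V^\perp=N-r$. So the task is the standard fact that a subspace of dimension $d$ contains a vector with at most $N-d$ zero coordinates. The difficulty is that $\F$ may be finite, even $\F_2$, so a ``generic'' vector need not exist. I would therefore avoid genericity and use reduced row echelon form (an information set) instead.

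Choose a basis $h_1,\ldots,h_d$ of $V^\perp$, with $d=N-r$. Viewing these as rows of a $d\times N$ matrix of rank $d$, there is a set $P\subseteq X$ of $d$ coordinates on which the restricted $d\times d$ submatrix is invertible. After a change of basis (row reduction) we may assume $h_m(p_{m'})=\delta_{mm'}$ for $P=\{p_1,\ldots,p_d\}$. Now put
\[
 h=h_1+\cdots+h_d\in V^\perp .
\]
Then $h(p_m)=1$ for every $m$, so $h$ is nonzero at all $d$ points of $P$. Hence its zero set lies inside $X\setminus P$, which has size $N-d=r$.

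This handles every case uniformly, including $r=N$, where $h=0$ and the bound is trivial. It also works over every field, because it uses only the existence of a pivot set and the coefficients $1$. The only point needing care is the step that looks like the main obstacle: the temptation to pick $h$ generically, which fails over small fields. The echelon-form choice sidesteps it completely. I would also note that $\dim V^\perp=N-\dim V$ holds for the bilinear pairing $\langle f,h\rangle=\sum_x f(x)h(x)$ over any field, since it is the standard dot product on $\F^X$.
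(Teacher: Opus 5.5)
Your proposal is correct and matches the paper's proof: it takes a reduced row-echelon basis of $V^\perp$ (which has dimension $N-r$) and sums the basis vectors. The sum equals $1$ at each of the $N-r$ pivot coordinates, so it has at most $r$ zeros.
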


\begin{proof}
Let $U=V^\perp$ where $\dim U=m:=N-r$.
Write a basis of $U$ as the rows of a matrix and put it in reduced
row-echelon form. Denote the
resulting basis by $h_1,\ldots,h_m$ and its pivot coordinates by
$x_1,\ldots,x_m\in X$. Thus
\[
 h_i(x_j)=\delta_{ij}\qquad(1\le i,j\le m).
\]
The function $h=h_1+\cdots+h_m$ belongs to $V^\perp$ and satisfies
$h(x_j)=1$ at each of the $m=N-r$ pivot coordinates. Hence it has
at most $r$ zero coordinates.
\end{proof}

\subsection{More than four inputs}

\begin{proof}[Proof of Theorem~\ref{thm:ap} for $k>4$]
Write $N=|G|$ and choose a minimum slice decomposition
\[
 T(x_1,\ldots,x_k)
 =\sum_{i=1}^k\sum_{j=1}^{r_i}f_{i,j}(x_i)G_{i,j}(\xx_{-i}),
 \qquad R:=\sr(T)=\sum_{i=1}^k r_i.
\]
As in the sunflower proof, the factors in each direction are linearly
independent. Put
\[
 V_i=\operatorname{span}_{\F}\{f_{i,1},\ldots,f_{i,r_i}\},
 \qquad \dim V_i=r_i.
\]
For every $i=5,\ldots,k$, apply Lemma~\ref{lem:annihilator} to choose
a function $h_i:G\to\F$ orthogonal to every element of $V_i$, with
at most $r_i$ zero coordinates. Define
\begin{equation}\label{eq:contract-ap}
 S(x_1,x_2,x_3,x_4)
 =\sum_{x_5,\ldots,x_k\in G}
   T(x_1,\ldots,x_k)\prod_{i=5}^k h_i(x_i).
\end{equation}

Consider this operation on the chosen slice decomposition. A slice in
direction $i\ge5$ contributes zero, because summing in $x_i$ gives the
factor $\langle f_{i,j},h_i\rangle=0$. A slice in direction $i\le4$
remains a slice in that direction, with its complementary factor
summed as in~\eqref{eq:contract-ap}. Therefore $S$ has a decomposition
with at most $r_1+\cdots+r_4$ slices. By~\eqref{eq:flatten-upper},
\begin{equation}\label{eq:contract-upper}
 \rank(M_S)\le N\sum_{i=1}^4r_i.
\end{equation}

We now count the nonzero entries of $M_S$ from its support. A
four-term progression $(a,a+h,a+2h,a+3h)$ has exactly one completion
to a $k$-term progression, since its first two entries determine both
$a$ and $h$. Hence the contraction sum has at most one nonzero term
for each fixed $(x_1,x_2,x_3,x_4)$; in particular, there is no
cancellation between different completions. Consequently,
\begin{equation}\label{eq:contract-values}
 S(a,a+h,a+2h,a+3h)
 =T(a,a+h,\ldots,a+(k-1)h)
          \prod_{i=5}^k h_i\bigl(a+(i-1)h\bigr),
\end{equation}
and $S$ vanishes outside the four-term progression relation. The
first factor on the right is nonzero by faithful realization.
Thus $M_S$ is obtained from a weighted permutation matrix by possibly
replacing some of its nonzero entries by zero.

Let $Z_i=\{x\in G:h_i(x)=0\}$, so $|Z_i|\le r_i$. For each
fixed $i$ and each $z\in Z_i$, exactly $N$ pairs $(a,h)\in G^2$
satisfy $a+(i-1)h=z$: choose $h$ arbitrarily and then $a$ is determined.
A union bound shows that at most $N\sum_{i=5}^k r_i$ of the $N^2$
entries in~\eqref{eq:contract-values} vanish. The surviving entries
lie in distinct rows and distinct columns, so
\begin{equation}\label{eq:contract-lower}
 \rank(M_S)\ge N^2-N\sum_{i=5}^k r_i.
\end{equation}
Thus removing the $r_i$ slices in an extra direction costs at most
$Nr_i$ in the matrix rank lower bound, exactly the contribution allowed
for $r_i$ slices in the first four directions by~\eqref{eq:flatten-upper}.
Combining~\eqref{eq:contract-upper} and~\eqref{eq:contract-lower} gives
\[
 N^2\le N\sum_{i=1}^k r_i=NR.
\]
Hence $R\ge N$. The upper bound $\sr(T)\le N$ completes the proof.
\end{proof}

\subsection{Distinct inputs}

We use the first coordinates $0,1,\ldots,k-1$ to distinguish the
inputs while preserving the progression relation.

\begin{proof}[Proof of Corollary~\ref{cor:distinct-ap}]
For $u_i\in\mathbb F_p^{n-1}$, restrict to
\[
 S(u_1,\ldots,u_k)
 =T\bigl((0,u_1),(1,u_2),\ldots,(k-1,u_k)\bigr).
\]
The first coordinates are pairwise distinct because $p\ge k$.
Thus every tuple queried in $T$ has distinct inputs, even when some
of the $u_i$ coincide. Its first coordinates already form a
progression, so the full tuple is a progression if and only if
$(u_1,\ldots,u_k)$ is a progression in $\mathbb F_p^{n-1}$.
Consequently, $S$ faithfully realizes the full progression relation
on the additive group of $\mathbb F_p^{n-1}$.
Theorem~\ref{thm:ap} and monotonicity under restriction give
\[
 p^{n-1}=\sr(S)\le\sr(T).
\]
\end{proof}

\bibliographystyle{plainnat}
\bibliography{slice_rank_limitations}

\end{document}